\newtheorem{theorem}{Theorem}
\newtheorem{lemma}{Lemma}
\newtheorem{remark}{Remark}
\newtheorem{corollary}{Corollary}
\begin{document}
\author{L.-E. Persson, G. Tephnadze, P. Wall}
\title[logarithmic means \dots]{On the Nörlund logarithmic means with
respect to Vilenkin system in the martingale Hardy space $H_{1}$}
\address{Department of Engineering Sciences and Mathematics, Lule\aa\ %
University of Technology, SE-971 87 Lule\aa , Sweden and UiT The Arctic
University of Norway, P.O. Box 385, N-8505, Narvik, Norway.}
\email{Lars-Erik.Persson@ltu.se}
\address{G. Tephnadze, The University of Georgia, school of Informatics, Engineering and
Mathematics, IV, 77a Merab Kostava St,
Tbilisi, 0128, Georgia, \& Department of Engineering Sciences and
Mathematics, Lule\aa University of Technology, SE-971 87 Lule\aa , Sweden.}
\email{giorgitephnadze@gmail.com}
\address{P. Wall, Department of Engineering Sciences and Mathematics, Lule%
\aa University of Technology, SE-971 87 Lule\aa , Sweden.}
\email{Peter.Wall@ltu.se}
\date{}
\thanks{The research was supported by Shota Rustaveli National Science
Foundation grant YS15-2.1.1-47, by a Swedish Institute scholarship no.
24155/2016}
\maketitle

\begin{abstract}
In this paper we prove and discuss a new divergence result of Nörlund
logarithmic means with respect to Vilenkin system in Hardy space $H_1. $
\end{abstract}

\date{}

\textbf{2000 Mathematics Subject Classification.} 42C10.

\textbf{Key words and phrases:} Vilenkin system, Nörlund logarithmic means,
partial sums, modulus of continuity, Hardy space.

\section{Introduction}

It is well-known that (see e.g. \cite{AVD} and \cite{gol}) Vilenkin systems
do not form bases in the space $L_{1}.$ Moreover, there exists a function in
the dyadic Hardy space $H_{1},$ such that the partial sums of $f$ are not
bounded in $L_{1}$-norm.

In \cite{tep7} (see also \cite{tepthesis}) it was proved that the following
is true:

\textbf{Theorem T1: }The\textbf{\ }maximal operator $\widetilde{S}^{\ast }$
defined by 
\begin{equation*}
\widetilde{S}^{\ast}:=\sup_{n\in \mathbb{N}}\frac{\left\vert
S_{n}\right\vert }{\log \left( n+1\right)}
\end{equation*}
is bounded from the Hardy space $H_{1}$ to the space $L_{1}.$ Here $S_n $
denotes the $n $-th partial sum with respect to the Vilenkin system.
Moreover, it was proved that the rate of the factor $log(n+1)$ is in a sense
sharp.

M\'oricz and Siddiqi \cite{Mor} investigate the approximation properties of
some special N\"orlund means of Walsh-Fourier series of $L_{p}$ functions in
norm. Fridli, Manchanda and Siddiqi \cite{FMS} improved and extended the
results of M\'oricz and Siddiqi \cite{Mor} to Martingale Hardy spaces. However, the
case when $\left\{q_k=1/k:k\in\mathbb{N}_+\right\}$ was excluded, since
the methods are not applicable to N\"orlund logarithmic means. In \cite{Ga2}
Gát and Goginava proved some convergence and divergence properties of
Walsh-Fourier series of the N\"orlund logarithmic means of functions in the
Lebesgue space $L_{1}.$ In particular, they proved that there exists an
function in the space $L_1, $ such that 
\begin{equation*}
\sup_{n\in \mathbb{N}}\left\Vert L_{n}f\right\Vert _{1}=\infty .
\end{equation*}

Analogical result for some unbounded Vilenkin systems was proved in \cite{bg1}.

In \cite{PTW} (see also \cite{tepthesis}) it was proved that there exists a martingale $f\in H_{p}, \ \ (0<p<1)$ such that 
\begin{equation*}
\sup_{n\in \mathbb{N}}\left\| L_{n}f\right\| _{p}=\infty,
\end{equation*}
Here $L_{n} $ is $n $-th N\"orlund logarithmic means with respect to
Vilenkin system.

In this paper we prove an analogical result for the bounded Vilenkin systems in the
case when $p=1.$ Moreover, we discuss boundedness of weighted maximal
operators on the Hardy space $H_1.$

\section{Preliminaries}

Let $\mathbb{N}_{+}$ denote the set of the positive integers, $\mathbb{N}:= 
\mathbb{N}_{+}\cup \{0\}.$

Let $m:=(m_{0},m_{1},\dots)$ denote a sequence of the positive numbers not
less than 2.

Denote by 
\begin{equation*}
Z_{m_{k}}:=\{0,1,\dots ,m_{k}-1\}
\end{equation*}%
the additive group of integers modulo $m_{k},$ $k\in \mathbb{N}.$

Define the group $G_{m}$ as the complete direct product of the group $%
Z_{m_{k}}$ with the product of the discrete topologies of $Z_{m_{k}}`$s.

The direct product $\mu$ of the measures\ 
\begin{equation*}
\mu _{k}\left( \{j\}\right):=1/m_{k},(j\in Z_{m_{k}})
\end{equation*}
is the Haar measure on $G_{m},$ with $\mu \left( G_{m}\right) =1.$

If $\sup\limits_{n}m_{n}<\infty $, then we call $G_{m}$ a bounded Vilenkin group. If the generating sequence $m$ is not bounded, then $G_{m}$ is said to be an unbounded Vilenkin group.
In this paper we discuss bounded Vilenkin groups only.

The elements of $G_{m}$ are represented by sequences 
\begin{equation*}
x:=(x_{0},x_{1},\dots,x_{k},\dots),\ \left(x_{k}\in Z_{m_{k}}\right).
\end{equation*}

It is easy to give a base for the neighbourhood of $G_{m}:$ 
\begin{equation*}
I_{0}\left( x\right):=G_{m},
\end{equation*}
\begin{equation*}
I_{n}(x):=\{y\in G_{m}\mid y_{0}=x_{0},\dots,y_{n-1}=x_{n-1}\}, (x\in G_{m},
n\in \mathbb{N}).
\end{equation*}
Denote $I_{n}:=I_{n}\left( 0\right),$ for $n\in \mathbb{N}$ and $\overset{-}{%
I_{n}}:=G_{m}\backslash I_{n}$.

The norm (or quasi-norm) of the spaces $L_{p}(G_{m})$ is defined by 
\begin{equation*}
\left\Vert f\right\Vert _{p}:=\left( \int_{G_{m}}\left\vert f\right\vert
^{p}d\mu \right) ^{1/p}\ \ \ \left( 0<p<\infty \right).
\end{equation*}

If we define the so-called generalized powers system based on $m$ in the
following way: 
\begin{equation*}
M_{0}:=1,\ M_{k+1}:=m_{k}M_{k}\ \qquad (k\in \mathbb{N}),
\end{equation*}%
then every $n\in \mathbb{N}$ can be uniquely expressed as $n=\overset{\infty 
}{\underset{k=0}{\sum }}n_{k}M_{k},$ where $n_{k}\in Z_{m_{k}}\ (k\in 
\mathbb{N})$ and only a finite number of $n_{k}`$s differ from zero. Let $%
\left\vert n\right\vert :=\max \{k\in \mathbb{N}:\ n_{k}\neq 0\}.$

Next, we introduce on $G_{m}$ an orthonormal system, which is called the
Vilenkin system. First define the complex valued functions $r_{k}\left(
x\right):G_{m}\rightarrow \mathbb{C},$ the generalized Rademacher functions,
by 
\begin{equation*}
r_{k}\left( x\right):=\exp \left( 2\pi \imath x_{k}/m_{k}\right),\ \left(
\imath ^{2}=-1,\ x\in G_{m}, k\in \mathbb{N} \right).
\end{equation*}

Now, define the Vilenkin systems $\psi:=(\psi _{n}:n\in \mathbb{N})$ on $%
G_{m} $ as: 
\begin{equation*}
\psi _{n}(x):=\prod_{k=0}^{\infty}r_{k}^{n_{k}}\left( x\right),\ \left( n\in 
\mathbb{N}\right).
\end{equation*}

Specifically, we call this system the Walsh-Paley one if $m\equiv 2$.

The Vilenkin systems are orthonormal and complete in $L_{2}\left(
G_{m}\right)$ (see e.g. $\,$\cite{AVD,Vi}).

If $f\in L_{1}\left( G_{m}\right) $ we can establish Fourier coefficients,
partial sums, Dirichlet kernels, with respect to Vilenkin systems in the
usual manner: 
\begin{equation*}
\widehat{f}\left( n\right) :=\int_{G_{m}}f\overline{\psi }_{n}d\mu ,\
\left(k\in \mathbb{N}\right),
\end{equation*}%
\begin{equation*}
S_{n}f:=\sum_{k=0}^{n-1}\widehat{f}\left( k\right) \psi _{k},\text{ \ } \ \
D_{n}:=\sum_{k=0}^{n-1}\psi _{k},\ \ \ \left( k\in \mathbb{N}\right) .
\end{equation*}%
Let $\left\{ q_{k}:k\geq 0\right\} $ be a sequence of nonnegative numbers.
The $n $-th Nörlund mean for the Fourier series of $f$ is defined by 
\begin{equation*}
t_nf=\frac{1}{{l_{n}}}\sum_{k=0}^{n-1}q_{n-k}S_{k}f.
\end{equation*}%
If $q_{k}={1}/{k}$, then we get the Nörlund logarithmic means: 
\begin{equation*}
L_{n}f:=\frac{1}{l_{n}}\sum_{k=0}^{n}\frac{S_{k}f}{n-k}, \ \
l_{n}:=\sum_{k=1}^{n}\frac{1}{k},
\end{equation*}
The kernel of the Nörlund logarithmic means is defined by 
\begin{equation*}
F_n:=\frac{1}{l_{n}}\sum_{k=0}^{n}\frac{D_k}{n-k}.
\end{equation*}
In the special case when $\{q_{k}=1:k\in N\},$ we get the Fej\'er means 
\begin{equation*}
\sigma _{n}f:=\frac{1}{n}\sum_{k=1}^{n}S_{k}f\,.
\end{equation*}
We define $n $-th Fej\'er kernel 
\begin{equation*}
K_{n}f:=\frac{1}{n}\sum_{k=1}^{n}D_{k}.
\end{equation*}
Let $f\in L_{1}(G_{m}).$ Then the maximal function is given by 
\begin{equation*}
f^{\ast}(x)=\sup_{n\in\mathbb{N}}\frac{1}{\left\vert I_{n}\left( x\right)
\right\vert }\left\vert \int_{I_{n}\left( x\right) }f\left( u\right) \mu
\left( u\right) \right\vert .
\end{equation*}
The Hardy martingale spaces $H_{1}$ consist of all martingales for which 
\begin{equation*}
\left\Vert f\right\Vert _{H_{1}}:=\left\Vert f^{\ast }\right\Vert
_{1}<\infty .
\end{equation*}
It it well-known (see e.g. \cite{sws} and \cite{We1}) that if $f\in L_{1},$
then 
\begin{equation}  \label{normsup}
\left\Vert f\right\Vert _{H_{1}}\sim \left\Vert \sup_{n\in \mathbb{N}%
}\left\vert S_{M_{n}}f\right\vert \right\Vert_{1}.
\end{equation}
A bounded measurable function $a$ is a  1-atom if  either $ a=1 $ or
\begin{equation*}
\int_{I}ad\mu =0,\ \left\Vert a\right\Vert _{\infty }\leq \mu \left(
I\right) ,\text{ \ supp}\left( a\right) \subset I.
\end{equation*}

\section{Auxiliary propositions}

The Hardy martingale space $H_{1}\left( G_{m}\right) $ has an atomic
characterization (see \cite{We1}, \cite{We3}):

\begin{lemma}
\label{lemma2.1} A function $f\in H_{1}$ if and only if there exist a
sequence $\left( a_{k},k\in \mathbb{N}\right) $ of 1-atoms and a sequence $%
\left( \mu _{k},k\in \mathbb{N}\right) $ of real numbers such that

\begin{equation}
\qquad \sum_{k=0}^{\infty }\mu _{k}a_{k}=f,\text{ \ a.e.,}  \label{condmart}
\end{equation}
where
\begin{equation*}
\qquad \sum_{k=0}^{\infty }\left\vert \mu _{k}\right\vert <\infty .
\end{equation*}%
Moreover, 
\begin{equation*}
\left\Vert f\right\Vert _{H_{1}}\backsim \inf \left( \sum_{k=0}^{\infty
}\left\vert \mu _{k}\right\vert \right),
\end{equation*}%
where the infimum is taken over all decomposition of $f$ of the form (\ref%
{condmart}).
\end{lemma}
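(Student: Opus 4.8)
This is the classical atomic characterisation of the martingale Hardy space $H_{1}$ over a regular filtration (cf.\ \cite{We1}, \cite{We3}), and the hypothesis that $G_{m}$ is a bounded Vilenkin group, i.e.\ $\sup_{n}m_{n}<\infty$, is exactly what makes the underlying filtration regular. I would prove the two implications separately and read off the norm equivalence at the end. For the direction ``atomic $\Rightarrow H_{1}$'', suppose $f=\sum_{k}\mu_{k}a_{k}$ a.e.\ with $\sum_{k}\vert\mu_{k}\vert<\infty$. Since a $1$-atom $a$ satisfies $\Vert a\Vert_{1}\leq1$, the series converges absolutely in $L_{1}$, hence $f\in L_{1}$ and $S_{M_{n}}f=\sum_{k}\mu_{k}S_{M_{n}}a_{k}$. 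As $S_{M_{n}}f$ is the conditional expectation of $f$ with respect to the $\sigma$-algebra generated by the sets $I_{n}(\cdot)$, the maximal function equals $f^{\ast}=\sup_{n}\vert S_{M_{n}}f\vert$ and is subadditive, so $f^{\ast}\leq\sum_{k}\vert\mu_{k}\vert\,a_{k}^{\ast}$ and $\Vert f\Vert_{H_{1}}\leq\sum_{k}\vert\mu_{k}\vert\,\Vert a_{k}^{\ast}\Vert_{1}$. It therefore suffices to bound $\Vert a^{\ast}\Vert_{1}$ by an absolute constant for one $1$-atom $a$. For $a\equiv1$ this is trivial. If $a$ is supported on a Vilenkin interval $I$ with $\int_{I}a\,d\mu=0$, then for $x\notin I$ and any $n$ the average $\mu(I_{n}(x))^{-1}\int_{I_{n}(x)}a\,d\mu$ vanishes (if $I\subseteq I_{n}(x)$ it equals $\int_{I}a=0$; if $I_{n}(x)\cap I=\emptyset$ it is $0$; and $I_{n}(x)\subsetneq I$ cannot occur for $x\notin I$), so $a^{\ast}$ is supported on $I$. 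By the Cauchy--Schwarz inequality and Doob's $L_{2}$-maximal inequality, $\Vert a^{\ast}\Vert_{1}\leq\mu(I)^{1/2}\Vert a^{\ast}\Vert_{2}\leq c\,\mu(I)^{1/2}\Vert a\Vert_{2}\leq c\,\mu(I)\Vert a\Vert_{\infty}\leq c$ by the size condition in the definition of a $1$-atom. Taking the infimum over all decompositions yields $\Vert f\Vert_{H_{1}}\leq c\inf\sum_{k}\vert\mu_{k}\vert$.

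\textbf{The converse.} Let $f=(f_{n})=(S_{M_{n}}f)\in H_{1}$; splitting off the constant $\widehat{f}(0)\cdot1$, which is a multiple of the atom $a\equiv1$ with $\vert\widehat{f}(0)\vert\leq\Vert f\Vert_{H_{1}}$, we may assume $\widehat{f}(0)=0$. For $k\in\mathbb{Z}$ one introduces stopping times $\tau_{k}$ adapted to the level sets $\{f^{\ast}>2^{k}\}$ together with the stopped martingales $f^{\tau_{k}}=(f_{n\wedge\tau_{k}})_{n}$; since $f\in H_{1}$ forces $f^{\ast}\in L_{1}$ and hence $f^{\ast}<\infty$ a.e., one obtains a telescoping decomposition $f=\sum_{k\in\mathbb{Z}}(f^{\tau_{k+1}}-f^{\tau_{k}})$ converging in $H_{1}$. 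For each $k$ the set $\{f^{\ast}>2^{k}\}$ is a disjoint union of maximal Vilenkin intervals $I^{k,j}$; multiplying the $k$-th difference by the indicators of these intervals splits it into pieces, each supported on a single $I^{k,j}$ and having integral zero over $I^{k,j}$ (optional stopping, using that on $I^{k,j}=I_{n}(y)$ both $\tau_{k}$ and $\tau_{k+1}$ are $\geq n$). After normalisation each piece becomes a $1$-atom $a^{k,j}$ with a coefficient $\mu^{k,j}$, and $\sum_{k,j}\mu^{k,j}a^{k,j}=f$.

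\textbf{The main obstacle and the norm equivalence.} The step I expect to be genuinely delicate is showing that the pieces have the correct $L_{\infty}$-size, i.e.\ $\vert\mu^{k,j}\vert\leq c\,2^{k}\mu(I^{k,j})$. Away from the stopping level this follows directly from the definition of $\tau_{k}$; at the stopping level it amounts to controlling the jump of the stopped martingale, and here the regularity of the Vilenkin filtration — the inequality $g_{n}\leq(\sup_{j}m_{j})\,g_{n-1}$ for every non-negative martingale $g$, valid precisely because $\sup_{n}m_{n}<\infty$ — is what supplies the bound; this is also where the construction of the relevant predictable majorant of $(\vert f_{n}\vert)$, with $L_{1}$-norm controlled by $\Vert f^{\ast}\Vert_{1}$, enters. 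Granting this, $\sum_{k,j}\vert\mu^{k,j}\vert\leq c\sum_{k}2^{k}\sum_{j}\mu(I^{k,j})=c\sum_{k}2^{k}\mu(\{f^{\ast}>2^{k}\})\leq c\,\Vert f^{\ast}\Vert_{1}=c\,\Vert f\Vert_{H_{1}}$, where the last estimate is the elementary one obtained by grouping the level sets dyadically. Hence $\inf\sum_{k}\vert\mu_{k}\vert\leq c\,\Vert f\Vert_{H_{1}}$, and together with the first paragraph this proves both the existence of the decomposition and the equivalence $\Vert f\Vert_{H_{1}}\sim\inf\sum_{k}\vert\mu_{k}\vert$. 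Apart from the $L_{\infty}$-calibration of the atoms, which is the only place where boundedness of the Vilenkin group is essential, the argument is routine bookkeeping with stopping times.
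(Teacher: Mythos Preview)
The paper does not actually prove this lemma: it is stated with a reference to Weisz \cite{We1}, \cite{We3} and used as a black box. Your sketch is precisely the standard argument found in those references --- Doob's inequality and the vanishing of $a^{\ast}$ off the supporting interval for the easy direction, and the stopping-time decomposition along dyadic level sets of $f^{\ast}$ (with regularity of the Vilenkin filtration supplying the $L_{\infty}$ bound on the pieces) for the converse --- so it is correct and in fact goes well beyond what the paper itself provides. There is nothing further to compare.
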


In Blahota, Gát \cite{bg1} the following lemma (see Lemma 5) was proved for unbounded Vilenkin systems:
\begin{lemma} \label{bt}
Let $q_A=M_{2A}+M_{2A-2}+...+M_{0}. $ If $  \log m_n = O(n^{\beta}),$ for some $ 0 < \delta < 1/2, $ then
\begin{equation*}
\left\Vert F_{q_A} \right\Vert_1\geq c(\log q_A)^{\beta},
\end{equation*}
for some $ 0<\beta <1-\delta .$
\end{lemma}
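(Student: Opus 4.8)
The plan is to compute the kernel $F_{q_{A}}$ almost explicitly and then read off a pointwise lower bound on a suitable family of ``shells''. The tools are the sparse digit structure of $q_{A}=\sum_{i=0}^{A}M_{2i}$ (digits equal to $1$ exactly in the even positions $0,2,\dots ,2A$) together with the standard Vilenkin identities: $D_{M_{k}}=M_{k}\mathbf{1}_{I_{k}}$, the splitting rule $D_{a+r}=D_{a}+\psi _{a}D_{r}$ valid whenever $r$ is small enough that its digits do not interfere with those of $a$, and the expansion $D_{a}=\sum_{j}\psi _{a^{(j+1)}}\bigl( \sum_{t<a_{j}}r_{j}^{t}\bigr) D_{M_{j}}$, where $a^{(j+1)}$ collects the digits of $a$ above position $j$. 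First I would reindex
\begin{equation*}
F_{q_{A}}=\frac{1}{l_{q_{A}}}\sum_{j=1}^{q_{A}}\frac{D_{q_{A}-j}}{j}
\end{equation*}
and partition $\left\{ 1,\dots ,q_{A}\right\} $ into the blocks $q_{l-1}<j\leq q_{l}$, $l=0,\dots ,A$, where $q_{l}:=\sum_{i=0}^{l}M_{2i}$ and $q_{-1}:=0$; these blocks are precisely the digit blocks of $q_{A}$. Writing $r:=q_{l}-j\in \left\{ 0,\dots ,M_{2l}-1\right\} $ we get $q_{A}-j=q_{A}^{(l+1)}+r$ with $q_{A}^{(l+1)}:=\sum_{i=l+1}^{A}M_{2i}$, hence $D_{q_{A}-j}=D_{q_{A}^{(l+1)}}+\psi _{q_{A}^{(l+1)}}D_{r}$ by the splitting rule.

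Collecting the two pieces and interchanging the order of summation in the first one, this should produce a decomposition of the form
\begin{equation*}
l_{q_{A}}F_{q_{A}}=\sum_{i=1}^{A}l_{q_{i-1}}M_{2i}\,\psi _{q_{A}^{(i+1)}}\mathbf{1}_{I_{2i}}+\sum_{l=1}^{A}\psi _{q_{A}^{(l+1)}}R_{l},\qquad R_{l}:=\sum_{r=0}^{M_{2l}-1}\frac{D_{r}}{q_{l}-r},
\end{equation*}
where the first sum is a ``main term'' and, because $q_{l}-r\backsim M_{2l}$, each $R_{l}$ is essentially a bounded multiple of the Fej\'er kernel $K_{M_{2l}}$ distorted by slowly varying weights. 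The main term is estimated from below on the shells $I_{2i}\setminus I_{2i+1}$: there only the summands with $i^{\prime }\leq i$ survive (since $D_{M_{2i^{\prime }}}$ vanishes off $I_{2i^{\prime }}$), the summand $i^{\prime }=i$ has modulus exactly $l_{q_{i-1}}M_{2i}$, and, because $M_{2i}/M_{2i-2}\geq 4$, the lower-order summands add up in modulus to at most a fixed fraction of it. Thus on the whole shell the main term has modulus at least $c\,l_{q_{i-1}}M_{2i}$, and since $\mu (I_{2i}\setminus I_{2i+1})\backsim 1/M_{2i}$, this shell contributes at least $c\,l_{q_{i-1}}/l_{q_{A}}$ to $\left\Vert F_{q_{A}}\right\Vert _{1}$.

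The decisive (and hardest) point is to show that the error sum $\sum_{l}\psi _{q_{A}^{(l+1)}}R_{l}$ cannot cancel the main term on these shells. Here I would bound $\int_{I_{2i}\setminus I_{2i+1}}\left\vert R_{l}\right\vert $ using $\left\Vert D_{r}\right\Vert _{1}\leq c\log (r+2)$, the bound $\left\vert D_{r}\right\vert \leq c\min (r,M_{2i})$ on $I_{2i}\setminus I_{2i+1}$, and $q_{l}-r\backsim M_{2l}$; for $l<i$ the factor $M_{2l}/M_{2i}$ gives geometric decay, while for $l\geq i$ one has to invoke the concentration of the Fej\'er-type kernel $R_{l}$ (its $L_{1}$ mass being bounded and thinly spread over the shells) so as not to lose a whole power of $A$. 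Under the hypothesis $\log m_{n}=O(n^{\delta })$, $0<\delta <1/2$, this keeps the error on the $i$-th shell below a fixed fraction of the main contribution for all $i$ in a final range $c^{\prime }A\leq i\leq A$.

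Summing the surviving shell contributions, and using $l_{q_{i-1}}\geq ci$ (valid since $m_{n}\geq 2$) together with $l_{q_{A}}\backsim \log q_{A}\leq cA^{1+\delta }$, we obtain
\begin{equation*}
\left\Vert F_{q_{A}}\right\Vert _{1}\geq \frac{c}{l_{q_{A}}}\sum_{c^{\prime }A\leq i\leq A}l_{q_{i-1}}\geq \frac{cA^{2}}{\log q_{A}}\geq c\left( \log q_{A}\right) ^{\beta }
\end{equation*}
for every $\beta \leq \tfrac{1-\delta }{1+\delta }$, and in particular for some $\beta \in (0,1-\delta )$, as claimed. The step I expect to be the bottleneck is the error estimate of the third paragraph: the crude pointwise bounds on $R_{l}$ are too wasteful, so the concentration of the Fej\'er-type kernels must be exploited carefully, and it is exactly this loss that both restricts us to $\delta <1/2$ and lowers the exponent from the value $1$ suggested by the main term alone to $\beta <1-\delta $.
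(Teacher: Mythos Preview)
The paper does not prove this lemma at all; it is quoted from Blahota and G\'at \cite{bg1}. The nearest proof in the paper is that of Lemma~\ref{ptw1} (the bounded case with the sharper exponent), which the authors explicitly say follows the Blahota--G\'at method. That method is \emph{recursive}: with $\theta_{n}:=l_{n}F_{n}$ one splits $\theta_{q_{A}}$ into a piece carrying $\theta_{q_{A-1}}$ and a remainder $II$, bounds $\Vert II\Vert_{1}\leq c$ via Abel summation and $\Vert K_{n}\Vert_{1}\leq c$, and arrives at $\Vert\theta_{q_{A}}\Vert_{1}\geq\Vert\theta_{q_{A-1}}\Vert_{1}+l_{q_{A-1}}-O(1)$, which is then iterated $A$ times.

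Your route is genuinely different: you unfold the recursion in one stroke into the closed form
\[
l_{q_{A}}F_{q_{A}}=\sum_{i=1}^{A}l_{q_{i-1}}M_{2i}\,\psi_{q_{A}^{(i+1)}}\mathbf{1}_{I_{2i}}+\sum_{l=0}^{A}\psi_{q_{A}^{(l+1)}}R_{l}.
\]
This identity is correct (the telescoping that produces the coefficients $l_{q_{i-1}}$ is fine), and your shell estimate for the main term is also correct and gives $\Vert\text{main}\Vert_{1}\geq c\sum_{i=1}^{A}l_{q_{i-1}}\geq cA^{2}$.

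Where the sketch wobbles is the error paragraph. Beating the error \emph{shell by shell}, as you propose, is both harder than necessary and not closed as written: on a fixed shell there are $A-i$ terms $R_{l}$ with $l>i$, each of which can put $O(1)$ mass there, so the bookkeeping you describe does not obviously yield a strict fraction of the main contribution. The clean fix is the \emph{global} estimate you already have implicitly: the same Abel summation plus $\Vert K_{n}\Vert_{1}\leq c$ that the paper uses for its term $II$ gives $\Vert R_{l}\Vert_{1}\leq c$ uniformly in $l$, hence $\Vert\text{error}\Vert_{1}\leq c(A+1)$, and therefore
\[
l_{q_{A}}\Vert F_{q_{A}}\Vert_{1}\geq\Vert\text{main}\Vert_{1}-\Vert\text{error}\Vert_{1}\geq cA^{2}-cA.
\]
Since $l_{q_{A}}\asymp\log q_{A}\leq CA^{1+\delta}$ under $\log m_{n}=O(n^{\delta})$, this yields $\Vert F_{q_{A}}\Vert_{1}\geq cA^{1-\delta}\geq c(\log q_{A})^{(1-\delta)/(1+\delta)}$, which is the stated conclusion with $\beta=(1-\delta)/(1+\delta)\in(0,1-\delta)$. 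So your architecture is sound and in fact produces the result without any shell-by-shell error control; the recursive argument of \cite{bg1} (mirrored in the paper for Lemma~\ref{ptw1}) reaches the same inequality $\sum_{i}l_{q_{i-1}}-O(A)$ by induction rather than by a single expansion.
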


For the proof of main result we also need the following new Lemma of
independent interest:

\begin{lemma} \label{ptw1}
Let $ G_m $ be a bounded Vilenkin system and $q_A=M_{2A}+M_{2A-2}+...+M_{0}. $ Then 
\begin{equation*}
\left\Vert F_{q_A} \right\Vert_1\geq c\log q_A.
\end{equation*}
\end{lemma}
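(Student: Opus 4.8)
The plan is to estimate the $L_1$-norm of the logarithmic kernel $F_{q_A}$ from below by exploiting the arithmetic structure of $q_A = M_{2A}+M_{2A-2}+\cdots+M_0$. First I would recall the standard closed forms for the Dirichlet kernels with respect to a bounded Vilenkin system: when $n = \sum_j n_j M_j$, one has the decomposition $D_n = \sum_{j} \psi_n \overline{D}_{M_j}^{\,(n)}$-type formula, and in particular $D_{M_j} = M_j \mathbf{1}_{I_j}$, together with the product expansion $D_{M_j + r M_j\text{-block}} = D_{M_j} + r_j^{?}\cdots$. The key structural fact I would use is that on the set where $x \in I_{2A} \setminus I_{2A+1}$ (or more precisely on carefully chosen ``rings'' $I_s \setminus I_{s+1}$), the Dirichlet kernels $D_k$ for $k$ in a suitable range behave like $M_s$ times a character, so that the averaged kernel $F_{q_A}$ picks up a logarithmic factor from the harmonic weights $1/(q_A - k)$.

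The main steps, in order, are as follows. (1) Write $F_{q_A} = \frac{1}{l_{q_A}} \sum_{k=0}^{q_A} \frac{D_k}{q_A - k}$ and note $l_{q_A} \sim \log q_A \sim A$ (since $G_m$ is bounded, $\log M_j \asymp j$). (2) Reindex via $k = q_A - j$ so the weights become $1/j$ for $j=1,\dots,q_A$, i.e. $F_{q_A} = \frac{1}{l_{q_A}} \sum_{j=1}^{q_A} \frac{D_{q_A - j}}{j}$. (3) Restrict attention to $j$ of the form $j$ ranging over a dyadic-in-$M$ scale and to $x$ in the rings $E_s := I_s \setminus I_{s+1}$ for even $s$; on $E_s$ one has $|D_{M_s}(x)| = 0$ but the ``defect'' terms coming from the lower blocks $M_{s-2}, M_{s-4},\dots$ of $q_A$ survive, and a computation with the explicit Vilenkin Dirichlet kernel formula shows $|D_{q_A - j}(x)| \gtrsim M_s$ for a positive-measure proportion of such $j$ and $x$. (4) Sum $\sum_s \int_{E_s} |F_{q_A}| \, d\mu \gtrsim \frac{1}{A} \sum_{s \text{ even}, s \le 2A} \frac{1}{M_s} \cdot M_s \cdot (\text{number of good } j \text{ near scale } M_s) \gtrsim \frac{1}{A} \sum_{s} 1 \gtrsim \frac{1}{A} \cdot A = c$, and then sharpen the counting of good indices $j$ so that each ring contributes $\gtrsim \log q_A / A$ rather than $O(1)$, yielding the full $c \log q_A$. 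This last sharpening is the crucial point: instead of one scale per ring, one collects a full harmonic sum $\sum_{j} 1/j$ worth of indices on which the kernel is large on a fixed ring, and $\sum_{j=1}^{M_s} 1/j \asymp s \asymp \log M_s$.

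The main obstacle I expect is step (3) together with the sharpened step (4): controlling the cancellation in $D_{q_A - j}(x)$ uniformly. The numbers $q_A - j$ do not have a clean Vilenkin digit expansion as $j$ varies, so one must carefully track the carries when subtracting $j$ from $q_A = M_{2A}+M_{2A-2}+\cdots+M_0$; the favorable case is when $j < M_{s}$ so that the digits of $q_A - j$ above level $s$ agree with those of $q_A$, and then $D_{q_A-j} = D_{M_{2A}+\cdots+M_{s}} + \psi_{(\text{high part})} D_{(\text{remainder} < M_s) }$-type splitting isolates a term that is $\overline{\psi}$-modulated and has modulus $M_s$ on $E_s$. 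One also needs that the various rings $E_s$ are disjoint (which they are by construction, $E_s \cap E_{s'} = \emptyset$ for $s \ne s'$), so the lower bounds add without overcounting. Since $G_m$ is bounded, all the constants depending on $\sup_n m_n$ are harmless, and I would quote Lemma~\ref{bt} of Blahota--G\'at only for orientation — the bounded case actually gives a stronger exponent $1$ in place of $\beta < 1-\delta$, precisely because $\log M_j \asymp j$ with no loss, so the same counting argument that produced $(\log q_A)^\beta$ there produces $\log q_A$ here.

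\begin{proof}[Proof sketch — to be completed]
See the plan above; the details follow by the explicit Vilenkin Dirichlet kernel identities and a summation over the disjoint rings $I_s\setminus I_{s+1}$.
\end{proof}
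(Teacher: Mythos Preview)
Your ring-by-ring strategy is genuinely different from the paper's argument, but as written it has a real gap at exactly the point you flag as the ``main obstacle'', and the gap is more serious than a matter of tracking carries. In step~(4) you want to bound $\int_{E_s}|F_{q_A}|\,d\mu$ from below by collecting many indices $j$ on which $|D_{q_A-j}(x)|\gtrsim M_s$ and summing their harmonic weights $1/j$. But $F_{q_A}$ is a \emph{sum} of these Dirichlet kernels, and the triangle inequality goes the wrong way: knowing that many individual terms are large in modulus does not prevent them from cancelling in the weighted sum. You would need to show that on $E_s$ the terms $D_{q_A-j}$ for the good $j$'s share a common phase (e.g.\ are all a fixed character times a positive quantity), or otherwise control the oscillation. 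Your splitting $D_{q_A-j}=D_{(\text{high part})}+\psi_{(\text{high part})}D_{(\text{low part})}$ is correct for $j<M_s$, but the ``high part'' term vanishes on $E_s$ for $s<2A$ (since $D_{M_{2A}+\cdots+M_s}$ is supported in $I_{2A}$), and the surviving low-part terms $D_{(\text{remainder})}$ for varying $j$ do not obviously align in phase. The carry analysis for $q_A-j$ when $j>q_{s/2-1}$ makes this worse. So the ``sharpening'' that upgrades the per-ring contribution from $O(1)$ to $\asymp s$ is not justified, and without it you only recover $\|F_{q_A}\|_1\gtrsim 1$.

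The paper avoids the cancellation issue entirely by a recursive argument. Writing $\theta_n=l_nF_n$, it splits $\theta_{q_A}=I+II$ according to whether $k<q_{A-1}$ or $k\ge q_{A-1}$ in $\sum_k D_{q_A-k}/k$. The identity $D_{M_{2A}+r}=D_{M_{2A}}+r_{2A}D_r$ for $r<M_{2A}$ gives $I=l_{q_{A-1}}D_{M_{2A}}+r_{2A}\theta_{q_{A-1}}$ exactly; an Abel transformation together with the uniform Fej\'er bound $\|K_n\|_1\le 2$ shows $\|II\|_1\le c$. Then, since $D_{M_{2A}}$ is supported in $I_{2A}$ while $\theta_{q_{A-1}}$ is constant there, one computes $\|l_{q_{A-1}}D_{M_{2A}}+r_{2A}\theta_{q_{A-1}}\|_1$ by splitting over $I_{2A}$ and its complement, obtaining the recursion $\|F_{q_A}\|_1\ge \frac{l_{q_{A-1}}}{l_{q_A}}\|F_{q_{A-1}}\|_1+\tfrac{1}{8}$ (for large $A$). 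Iterating yields $\|F_{q_A}\|_1\gtrsim A\asymp\log q_A$. The recursion sidesteps any pointwise analysis of $D_{q_A-j}$ for generic $j$, which is precisely where your approach stalls.
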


\begin{proof} During the proof we use some method of Blahota, Gát \cite{bg1} (see Lemma 5) and Gát, Goginava \cite{Ga2} (see Lemma 2).

Set 
\begin{equation*}
\theta_{n}:=l_{n}F_{n}=\overset{n-1}{\underset{k=1}{\sum }}\frac{D_{n-k}}{k}.
\end{equation*}

Then we have that 
\begin{eqnarray*}
\theta_{q_A}(x)&=&\overset{M_{2A-2}+...+M_{0}-1}{\underset{k=1}{\sum }}\frac{%
1}{k}D_{M_{2A}+M_{2A-2}+...+M_{0}-k}(x) \\
&+&\overset{M_{2A}+...+M_{0}-1}{\underset{k=M_{2A-2}+...+M_{0}}{\sum }}\frac{%
1}{k}D_{M_{2A}+M_{2A-2}+...+M_{0}-k}(x) \\
&:=& I+II.
\end{eqnarray*}

We first discuss $I.$ Since $k<M_{2A-2}+...+M_0,$ then 
\begin{eqnarray*}
D_{M_{2A}+...+M_0-k}(x)=D_{M_{2A}}(x)+r_{2A}D_{M_{2A-2}+...+M_0-k}(x).
\end{eqnarray*}

This gives that 
\begin{eqnarray}  \label{nordn1}
I=l_{q_{A-1}}D_{M_{2A}}(x)+r_{2A}G_{M_{2A-2}+...+M_{0}}(x).
\end{eqnarray}

Moreover, by using Abel transformation, we get that 
\begin{eqnarray*}
II&=&\overset{M_{2A}+...+M_{0}-1}{\underset{k=M_{2A-2}+...+M_{0}}{\sum }}%
\frac{1}{k}D_{M_{2A}+M_{2A-2}+...+M_{0}-k} \\
&=& \frac{K_{1}}{M_{2A}+...+M_{0}-1}-\frac{\left(M_{2A}+...+M_{0}-2%
\right)K_{M_{2A}+...+M_{0}-2}}{M_{2A}+...+M_{0}-1} \\
&+&\overset{M_{2A}+...+M_{0}-2}{\underset{k=M_{2A-2}+...+M_{0}}{\sum }}\frac{%
M_{2A}+M_{2A-2}+...+M_{0}-k}{k(k+1)}K_{M_{2A}+M_{2A-2}+...+M_{0}-k}.
\end{eqnarray*}
Since (for details see e.g. \cite{AVD}) $\left\Vert K_{n}\right\Vert
_{1}\leq 2,$ for all $n\in \mathbb{N},$ we obtain that 
\begin{eqnarray}
&&\left\Vert II\right\Vert _{1}\leq \frac{2}{M_{2A}+...+M_{0}-1}+\frac{%
2\left( M_{2A}+...+M_{0}-2\right) }{M_{2A}+...+M_{0}-1}  \label{nordn2} \\
&+&2\overset{M_{2A}+...+M_{0}-2}{\underset{k=M_{2A-2}+...+M_{0}}{\sum }}%
\frac{M_{2A}+M_{2A-2}+...+M_{0}-k}{k(k+1)}<c<\infty .  \notag
\end{eqnarray}
Hence, by (\ref{nordn1}) and (\ref{nordn2}), we get that 
\begin{equation*}
\left\Vert \theta_{q_A} \right\Vert_1\geq \left\Vert
l_{q_{A-1}}D_{M_{2A}}+r_{2A}\theta_{q_{A-1}} \right\Vert_1-c.
\end{equation*}

We now discuss the right-hand side of this inequality, more exactly we give
a lower bound for it. First, we consider the integral on the set $G_m\setminus
I_{2A}$: 
\begin{eqnarray*}
&&\int_{G_m\setminus I_{2A}} {\left\vert
l_{q_{A-1}}D_{M_{2A}}(x)+r_{2A}\theta_{q_{A-1}}(x) \right\vert}d\mu(x) \\
&=&\int_{G_m\setminus I_{2A}} {\left\vert \theta_{q_{A-1}}(x) \right\vert}%
d\mu(x)=\left\Vert \theta_{q_{A-1}} \right\Vert_1-\int_{I_{2A}} {\left\vert
\theta_{q_{A-1}}(x) \right\vert}d\mu(x) \\
&=&\left\Vert \theta_{q_{A-1}} \right\Vert_1-\frac{ \theta_{q_{A-1}}(0) }{%
M_{2A}}.
\end{eqnarray*}
Next, we note that on the set $I_{2A}$ we have that 
\begin{eqnarray*}
\int_{I_{2A}} {\left\vert l_{q_{A-1}}D_{M_{2A}}(x)+r_{2A}\theta_{q_{A-1}}(x)
\right\vert}d\mu(x) =l_{q_{A-1}}-\frac{ \theta_{q_{A-1}}(0) }{M_{2A}}.
\end{eqnarray*}
It follows that 
\begin{equation}  \label{norgm1}
\left\Vert \theta_{q_A} \right\Vert_1\geq \left\Vert \theta_{q_{A-1}}
\right\Vert_1+l_{q_{A-1}}-\frac{ 2\theta_{q_{A-1}}(0)}{M_{2A}}-c.
\end{equation}
Moreover, according to simple estimation, 
\begin{equation*}
\theta_{n}(0)=\overset{n-1}{\underset{k=1}{\sum }}\frac{n-k}{k}=n \overset{%
n-1}{\underset{k=1}{\sum }}\frac{1}{k}-n+1=nl_n +O(n).
\end{equation*}
Therefore, since 
\begin{equation*}
q_{A-1}\leq M_{2A-2}(1+\frac{1}{4}+\frac{1}{16}+...)= \frac{4}{3}%
M_{2A-2}\leq \frac{1}{3}M_{2A}
\end{equation*}%
we obtain that    
\begin{equation}
\frac{2}{l_{q_{A}}M_{2A}}\theta _{q_{A-1}}(0)=\frac{2q_{A-1}}{M_{2A}}\frac{%
l_{q_{A-1}}}{l_{q_{A}}}+o(1)\leq \frac{2}{3}\frac{l_{q_{A-1}}}{l_{q_{A}}}%
+o(1).  \label{norgm2}
\end{equation}
Finally, by using (\ref{norgm1}) and (\ref{norgm2}), we conclude that 
\begin{eqnarray*}
\left\Vert F_{q_{A}}\right\Vert _{1} &=&\frac{1}{l_{q_{A}}}\left\Vert \theta
_{q_{A}}\right\Vert _{1} \\
&\geq &\frac{1}{l_{q_{A}}}\left\Vert \theta _{q_{A-1}}\right\Vert _{1}+\frac{%
l_{q_{A-1}}}{l_{q_{A}}}-\frac{2}{3}\frac{l_{q_{A-1}}}{l_{q_{A}}}-o(1) \\
&\geq &\frac{1}{l_{q_{A}}}\left\Vert \theta _{q_{A-1}}\right\Vert _{1}+\frac{%
1}{3}\frac{l_{q_{A-1}}}{l_{q_{A}}}-o(1) \\
&\geq &\frac{1}{l_{q_{A}}}\left\Vert \theta _{q_{A-1}}\right\Vert _{1}+\frac{%
1}{6}-o(1) \\
&\geq &\frac{l_{q_{A-1}}}{l_{q_{A}}}\left\Vert F_{q_{A-1}}\right\Vert _{1}+%
\frac{1}{6}-o(1) \\
&\geq &\frac{l_{q_{A-1}}}{l_{q_{A}}}\left\Vert F_{q_{A-1}}\right\Vert _{1}+%
\frac{1}{8}
\end{eqnarray*}

By iterating this estimate we get 
\begin{equation*}
\left\Vert F_{q_{A}}\right\Vert _{1}\geq \frac{1}{8}\overset{A-1}{\underset{%
k=1}{\sum }}\frac{l_{q_{k}}}{l_{q_{A}}}\geq \frac{c}{8A}\overset{A-1}{%
\underset{k=1}{\sum }}k\geq cA\geq c\log q_{A}.
\end{equation*}

The proof is complete.
\end{proof}

\section{Formulation of the Main Results}

Our main results read:

\begin{theorem}
Let $ G_m $ be a bounded Vilenkin system. Then there exist a martingale $f\in H_{1}$ such that 
\begin{equation} \label{main}
\sup\limits_{n\in \mathbb{N}}\left\Vert L_{n}f\right\Vert_{1}=+\infty .
\end{equation}
\end{theorem}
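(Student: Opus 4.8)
The plan is to use the standard ``counterexample via atoms'' scheme together with the key estimate from Lemma~\ref{ptw1}. First I would fix a rapidly increasing lacunary subsequence $\{n_k\}$ of the indices $q_{A_k}$ produced in Lemma~\ref{ptw1}, growing fast enough that $M_{2A_k}$ (and hence $q_{A_k}$) dominates all previously chosen scales. For each $k$ I would build an atom $a_k$ supported on a small interval $I_{N_k}$ (with $N_k$ roughly $2A_k$ or a fixed amount larger) whose Fourier transform is essentially a modulated Dirichlet-type block, designed so that for a suitable index $m_k$ near $q_{A_k}$ the Vilenkin--Fourier partial sums interact with the Nörlund logarithmic kernel to reproduce $F_{q_{A_k}}$ up to a bounded error. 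Concretely, I would set $f:=\sum_k \lambda_k a_k$ with $\lambda_k = 1/A_k$ (or any summable positive weights like $k^{-2}$), so that $\sum_k|\lambda_k|<\infty$ and, by Lemma~\ref{lemma2.1}, $f\in H_1$ with $\|f\|_{H_1}\le c\sum_k|\lambda_k|<\infty$.

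Next I would estimate $L_{m}f$ for the distinguished indices $m=m_k$. By linearity $L_{m_k}f=\sum_j \lambda_j L_{m_k}a_j$. The heart of the argument is to show: (i) the ``diagonal'' term $\lambda_k L_{m_k}a_k$ has $L_1$-norm at least $c\lambda_k \log q_{A_k}\sim c\lambda_k A_k\sim c$ (a constant bounded away from zero), using that $L_{m_k}a_k$ essentially equals $\widehat f$-weighted copies of $F_{q_{A_k}}$ and invoking $\|F_{q_{A_k}}\|_1\ge c\log q_{A_k}$ from Lemma~\ref{ptw1}; (ii) the ``future'' terms $\sum_{j>k}\lambda_j L_{m_k}a_j$ vanish, because an atom $a_j$ with $j>k$ is supported on an interval deep enough that $S_\ell a_j=0$ for all $\ell\le m_k$ (its nonzero Fourier coefficients sit at frequencies $\ge M_{N_j}>m_k$), whence $L_{m_k}a_j=0$; and (iii) the ``past'' terms $\sum_{j<k}\lambda_j L_{m_k}a_j$ are uniformly bounded in $L_1$, using the trivial bound $\|L_{m_k}a_j\|_1\le \frac{1}{l_{m_k}}\sum_{i}\frac{1}{m_k-i}\|S_i a_j\|_1$ together with $\|S_i a_j\|_1\le c$ (which follows since each $a_j$ is a finite linear combination of bounded frequencies and the partial sums of an $H_1$ atom are controlled) and the summability of $\lambda_j$. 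Combining (i)--(iii) gives $\|L_{m_k}f\|_1\ge c - C$ only if the past part is small; to make the past genuinely negligible I would instead choose the weights and scales so that the past contribution is $o(1)$ or bounded by a fixed fraction of the diagonal term — e.g. by letting the gaps $A_{k+1}/A_k\to\infty$ and exploiting that $L_{m_k}a_j$ for $j<k$ is an average over partial sums whose relevant frequencies are $O(M_{N_j})$, hence $\|L_{m_k}a_j\|_1$ decays like $\log M_{N_j}/\log m_k\to 0$. Then $\|L_{m_k}f\|_1\to\infty$ along a subsequence, or at least stays bounded below by a positive constant with the diagonal piece unbounded after re-summing several atoms at the same scale.

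Actually the cleanest route, which I would adopt, is the one from \cite{PTW}: do not ask each single atom to produce a large norm, but pick $\lambda_k$ and the construction so that $\|L_{m_k}f\|_1\ge c\,\lambda_k\log q_{A_k}-C_k$ where $\lambda_k\log q_{A_k}\to\infty$ while $\sum\lambda_k<\infty$ — impossible for a single sequence, so one instead sums over a block of atoms at each scale, or equivalently takes $\lambda_k=1/A_k$ and notes $\lambda_k\log q_{A_k}\sim 1$ is bounded but the correct statement is the \emph{supremum} over all $n$, not the limit; taking the supremum over the family $\{m_k\}$ and letting the errors be controlled uniformly then yields $\sup_n\|L_nf\|_1\ge \sup_k\bigl(c-C\bigr)$. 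To actually reach $+\infty$ I would weight so that finitely many atoms at scale $A_k$ stack up: replace $a_k$ by $r$ translated/modulated copies and set the effective coefficient so the diagonal term at that scale is $\ge c\,r$, with $r=r_k\to\infty$ chosen compatibly with $\sum \lambda_k<\infty$; this is exactly the device that makes the supremum infinite.

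\textbf{Main obstacle.} The delicate point is step~(i): showing that for a well-chosen index $m_k$ the operator $L_{m_k}$ applied to the tailored atom $a_k$ genuinely reproduces the large kernel $F_{q_{A_k}}$ (rather than some smaller or oscillating piece), i.e. controlling the Vilenkin--Fourier partial sums $S_i a_k$ across the whole range $i\le m_k$ and summing them against the logarithmic weights $1/(m_k-i)$ without losing the $\log q_{A_k}$ gain. This requires a careful choice of the atom's frequency support (a lacunary-modulated block aligned with the generalized powers $M_{2A_k}, M_{2A_k-2},\dots$) so that the convolution telescopes to $F_{q_{A_k}}$ up to a bounded remainder — precisely the combinatorics that Lemma~\ref{ptw1} was set up to feed into. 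The bounded-Vilenkin hypothesis is used here (and only here, together with $\|K_n\|_1\le 2$) to keep all error kernels uniformly $L_1$-bounded.
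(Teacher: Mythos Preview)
Your overall architecture---a sum of atoms, a past/diagonal/future split of $L_{m_k}f$, and an appeal to the kernel estimate for the diagonal term---matches the paper's. But there is a concrete error that derails the argument. You assert that having $\lambda_k\log q_{A_k}\to\infty$ together with $\sum_k\lambda_k<\infty$ is ``impossible for a single sequence''. This is false: if $\{A_k\}$ is sufficiently lacunary (say $A_k\ge 4^k$) and $\lambda_k=A_k^{-1/2}$, then $\sum_k\lambda_k<\infty$ while $\lambda_k\log q_{A_k}\sim\lambda_k A_k=A_k^{1/2}\to\infty$. The paper does exactly this: it takes weights of order $\alpha_k^{-1/2}$ (with $\alpha_k$ playing the role of your $A_k$), and the diagonal term then has $L_1$-norm at least $c\,\alpha_k^{-1/2}\|F_{q_{\alpha_k-1}}\|_1\ge c\,\alpha_k^{1/4}\to\infty$ directly---no stacking, no blocks of translated atoms, no re-summing. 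Your detour through ``$r_k$ translated/modulated copies'' is therefore unnecessary and, as written, not actually carried out.

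The second gap is the atom itself. You describe it only as a ``modulated Dirichlet-type block'' whose frequency support is ``aligned with $M_{2A_k},M_{2A_k-2},\dots$'', and flag the verification of step~(i) as the main obstacle. In the paper this step is short and explicit: the atom is simply (a normalized multiple of) $D_{M_{2\alpha_k+1}}-D_{M_{2\alpha_k}}$, so $\widehat f$ is constant on $[M_{2\alpha_k},M_{2\alpha_k+1})$. With the evaluation index $m_k=q_{\alpha_k}=M_{2\alpha_k}+M_{2\alpha_k-2}+\dots+M_0$, one has for $M_{2\alpha_k}\le j\le q_{\alpha_k}$ that $S_jf$ equals a bounded past piece plus $\alpha_k^{-1/2}(D_j-D_{M_{2\alpha_k}})=\alpha_k^{-1/2}\psi_{M_{2\alpha_k}}D_{j-M_{2\alpha_k}}$, via the single identity $D_{j+M_n}=D_{M_n}+\psi_{M_n}D_j$. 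Summing against $1/(q_{\alpha_k}-j)$ collapses the diagonal exactly to $(l_{q_{\alpha_k-1}}/l_{q_{\alpha_k}})\,\alpha_k^{-1/2}\,\psi_{M_{2\alpha_k}}F_{q_{\alpha_k-1}}$; no further combinatorics is needed. The lower bound then comes from Lemma~\ref{bt} with $\beta=3/4$ (this is what the paper actually invokes, though Lemma~\ref{ptw1} would work as well and give $\alpha_k^{1/2}$ in place of $\alpha_k^{1/4}$).
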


\begin{remark}
In one point of view Theorem 1 of Blahota and Gát \cite{bg1} is better then our resut (see Theorem 1) because in their result boundedness of the group $G_m$ is not necessary and in the other point of view theorem of Blahota and Gát is a slightly weaker since they construct function in the Lebesgue space  $L_1 $.

\end{remark}

The next result can be found in \cite{tepthesis} and \cite{PTW}, respectively:

\begin{corollary}
Let $ G_m $ be a bounded Vilenkin system. Then there exists a martingale $f\in H_{1}$ such that 
\begin{equation*}
\left\Vert L^{\ast }f\right\Vert _{1}=+\infty.
\end{equation*}
\end{corollary}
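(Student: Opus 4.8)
The plan is to obtain the Corollary as an immediate consequence of Theorem 1, with no new construction required. Recall that the maximal operator associated with the N\"orlund logarithmic means is $L^{\ast}f:=\sup_{n\in\mathbb{N}}\left\vert L_{n}f\right\vert$. First I would take precisely the martingale $f\in H_{1}$ furnished by Theorem 1, so that $\sup_{n\in\mathbb{N}}\left\Vert L_{n}f\right\Vert_{1}=+\infty$; the Corollary will be established for this same $f$.

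Next, for every fixed $n\in\mathbb{N}$ and a.e.\ $x\in G_{m}$ one has the trivial pointwise majorization $\left\vert L_{n}f(x)\right\vert\le\sup_{m\in\mathbb{N}}\left\vert L_{m}f(x)\right\vert=L^{\ast}f(x)$; integrating over $G_{m}$ yields $\left\Vert L_{n}f\right\Vert_{1}\le\left\Vert L^{\ast}f\right\Vert_{1}$ for each $n$. Taking the supremum over $n$ on the left-hand side then gives $\left\Vert L^{\ast}f\right\Vert_{1}\ge\sup_{n\in\mathbb{N}}\left\Vert L_{n}f\right\Vert_{1}=+\infty$, which is exactly the assertion of the Corollary.

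The only point that needs (entirely routine) justification is that $L^{\ast}f$ is a well-defined, measurable $[0,+\infty]$-valued function, so that $\int_{G_{m}}L^{\ast}f\,d\mu$ makes sense and the displayed inequality above is legitimate; this holds because each $L_{n}f$ is measurable (it is a finite linear combination of the Vilenkin functions $\psi_{k}$ with coefficients built from the Fourier coefficients of the martingale $f$) and the pointwise supremum of countably many measurable functions is measurable. Hence there is essentially no obstacle in this step: all the difficulty is concentrated in Theorem 1, and its conclusion is transferred to the maximal operator $L^{\ast}$ purely by the monotonicity argument above. (More generally, the same reasoning shows that any $L_{1}$-norm unboundedness statement for the family $\{L_{n}\}$ automatically implies the corresponding $L_{1}$-norm unboundedness of $L^{\ast}$.)
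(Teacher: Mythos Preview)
Your argument is correct and matches the paper's intent: the paper does not give an explicit proof of this corollary but presents it as an immediate consequence of Theorem~1 (and notes it was already known from \cite{tepthesis}). The trivial pointwise domination $|L_n f|\le L^{\ast}f$ you use is exactly how a ``corollary'' of Theorem~1 is meant to be read here.
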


\begin{corollary}
Let $0<p<1$ and $ G_m $ be a bounded Vilenkin system. Then there exists a martingale $f\in H_{p}$ such that 
\begin{equation*}
\left\Vert L^{\ast }f\right\Vert _{p}=+\infty .
\end{equation*}
\end{corollary}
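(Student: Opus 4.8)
The plan is to deduce Corollary 2 from the Main Theorem (the $p=1$ blow-up) by a transference argument, using the nesting of Hardy spaces and the fact that the logarithmic means are built from the same Dirichlet kernels regardless of the ambient quasi-norm. Since for $0<p<1$ we have $H_1\subset H_p$ with $\Vert g\Vert_{H_p}\le C_p\Vert g\Vert_{H_1}$ for any fixed martingale $g$, the martingale $f\in H_1$ produced in the Main Theorem already lies in $H_p$. The task is therefore to show that the $L_1$-unboundedness of the means $L_nf$ forces $L_p$-unboundedness of $L^{\ast}f$ for every $0<p<1$, where $L^{\ast}f:=\sup_{n}\vert L_nf\vert$.

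First I would recall the central computational input, namely Lemma \ref{ptw1}: for the indices $q_A=M_{2A}+M_{2A-2}+\dots+M_0$ the kernel satisfies $\Vert F_{q_A}\Vert_1\ge c\log q_A$. The key point is that this lower bound, once available for the $L_1$-norm of the kernel, can be localized: by examining the construction in Lemma \ref{ptw1} one sees that a substantial fraction of the mass of $\theta_{q_A}$ (hence of $F_{q_A}$) is carried by the concentration of the term $l_{q_{A-1}}D_{M_{2A}}$ together with the iterated contributions, each living on sets of the form $I_{2k}\setminus I_{2k+2}$. On such a set of measure comparable to $1/M_{2k}$ the kernel attains values of order $l_{q_{k}}$, and these dyadic blocks do not overlap. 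This block structure is exactly what is needed to pass from the $L_1$ estimate to an $L_p$ estimate for $p<1$.

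The main step is then to build, for each $0<p<1$, a single martingale $f\in H_p$ (which may be taken to be the same $f$ as in the $p=1$ case, or a suitable atomic variant) and to estimate $\Vert L^{\ast}f\Vert_p$ from below. I would test $L^{\ast}f$ against the indices $q_A$: writing $\vert L^{\ast}f\vert\ge\vert L_{q_A}f\vert$ pointwise and using that $f$ is constructed from atoms whose Fourier–Vilenkin support interacts with $F_{q_A}$ precisely on the nonoverlapping blocks above, I would obtain on each block $I_{2k}\setminus I_{2k+2}$ a pointwise lower bound for $\vert L_{q_A}f\vert$ of the order $l_{q_k}/l_{q_A}\cdot M_{2k}$ (reflecting the concentration of the kernel), up to the normalization by the atom's coefficient. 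Raising to the $p$-th power and integrating over the disjoint blocks gives
\begin{equation*}
\Vert L^{\ast}f\Vert_p^p\ge c\sum_{k=1}^{A-1}\Big(\frac{l_{q_k}}{l_{q_A}}M_{2k}\Big)^p\frac{1}{M_{2k}}=c\sum_{k=1}^{A-1}\Big(\frac{l_{q_k}}{l_{q_A}}\Big)^pM_{2k}^{\,p-1},
\end{equation*}
and since $p-1<0$ one must check that the geometric growth of $M_{2k}$ in the denominator is outpaced by the accumulated logarithmic factors $l_{q_k}\sim c\,k$ summed over $k$. The hard part will be verifying that this sum diverges as $A\to\infty$ uniformly in the chosen normalization, i.e.\ calibrating the atomic coefficients $\mu_k$ so that $\sum\vert\mu_k\vert<\infty$ (ensuring $f\in H_p$) while the weighted block sum above still blows up. This is a delicate balance, and I expect it to be the principal technical obstacle; it is resolved by choosing coefficients decaying like a convergent $p$-series adjusted by the $M_{2k}$ factors, exactly as in the reference \cite{PTW} cited for this corollary. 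Once the coefficients are fixed, the blow-up $\Vert L^{\ast}f\Vert_p=+\infty$ follows, completing the proof.
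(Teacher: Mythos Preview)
The paper does not actually prove this corollary: it is stated as a known result and attributed to \cite{PTW} and \cite{tepthesis}. So there is no paper proof to compare against; the corollary is quoted precisely because its proof lives in those references and is \emph{not} a consequence of the $p=1$ theorem proved here.

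Your attempted transference from the $p=1$ result has a genuine gap. On a probability space, $\Vert\cdot\Vert_p\le\Vert\cdot\Vert_1$ for $p<1$, so $L_1$-divergence of $L_nf$ does not by itself force $L_p$-divergence. You try to repair this via a block-by-block lower bound, but the displayed estimate
\[
\Vert L^{\ast}f\Vert_p^p\ \ge\ c\sum_{k=1}^{A-1}\Big(\tfrac{l_{q_k}}{l_{q_A}}\Big)^{p}M_{2k}^{\,p-1}
\]
does \emph{not} diverge as $A\to\infty$: since $p-1<0$ and $M_{2k}\ge 2^{2k}$, the factor $M_{2k}^{\,p-1}$ decays geometrically, and this easily dominates the polynomial factor $(l_{q_k}/l_{q_A})^p\sim(k/A)^p$; the full series $\sum_k M_{2k}^{\,p-1}$ converges. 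You notice the tension yourself and then propose to ``calibrate the atomic coefficients'' as in \cite{PTW} --- but that means abandoning the martingale $f$ from Theorem~1 and building a new, $p$-dependent martingale, which is exactly what the cited references do. In those proofs the atoms carry an extra factor $M_{2\alpha_k}^{1/p-1}$ (you can already see this scaling leaking into the paper's computation of $S_jf$, where the coefficient $M_{2\alpha_\eta}^{1/p-1}$ appears), and it is this $p$-specific renormalization that makes the lower bound blow up. So the corollary is not a consequence of the main theorem via your transference argument; it requires the separate construction from \cite{PTW}.
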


\begin{theorem}
\label{theorem1}a) Let $ G_m $ be a bounded Vilenkin system. Then the maximal operator 
\begin{equation*}  \label{wemax}
\overset{\sim }{L}^{\ast }f:=\sup_{n\in \mathbb{N}}\frac{\left\vert
L_{n}f\right\vert }{\log\left(n+1\right)}
\end{equation*}
is bounded from the Hardy space $H_{1}\left( G_{m}\right) $ to the space $%
L_{1}\left( G_{m}\right) .$

b) Let $\varphi :\mathbb{N}_{+}\rightarrow \lbrack 1,\infty )$ be a
nondecreasing function satisfying the condition%
\begin{equation}
\overline{\lim_{n\rightarrow \infty }}\frac{\log \left( n+1\right) }{\varphi
\left( n\right) }=+\infty .  \label{5}
\end{equation}%
Then there exists a martingale $f\in H_{1}\left( G_{m}\right),$ such that the maximal operator 
\begin{equation*}
\sup_{n\in \mathbb{N}}\frac{\left\vert L_{n}f\right\vert }{\varphi \left(
n\right) }
\end{equation*}%
is not bounded from the Hardy space $H_{1}\left(G_{m}\right) $ to the
Lebesgue space $L_{1}\left( G_{m}\right) .$
\end{theorem}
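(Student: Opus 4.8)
\textbf{Part a).} The plan is to exploit the known decomposition of the logarithmic kernel $F_n$ into a "main" piece controlled by partial sums and a "Fej\'er-type" piece controlled by $\sigma_n$. Concretely, writing $\theta_n = l_n F_n = \sum_{k=1}^{n-1} D_{n-k}/k$ and applying the Abel transformation as in the proof of Lemma \ref{ptw1}, one obtains
\begin{equation*}
L_n f = \frac{1}{l_n}\sum_{k=1}^{n-1}\frac{S_{n-k}f}{k},
\end{equation*}
and an Abel rearrangement gives $L_n f$ as a (suitably normalized) average of $S_j f$ against coefficients summing to $O(1)$, plus a term which is a convex combination of Fej\'er means $\sigma_j f$. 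Since $\log(n+1)\sim l_n$, after dividing by $\log(n+1)$ the Fej\'er part is dominated by $\sup_j |\sigma_j f|$, whose maximal operator is already known to be bounded $H_1\to L_1$ (this is classical; it also follows from $\|K_n\|_1\le 2$ together with the atomic characterization). The partial-sum part, after dividing by $l_n$, is exactly of the form $\frac{1}{\log(n+1)}\sum (\text{coeffs summing to }O(1))\,|S_j f|$, which is pointwise $\le c\,\widetilde S^{\ast}f$ with $\widetilde S^{\ast}$ the operator of Theorem T1. Invoking Theorem T1 (bounded $H_1\to L_1$) finishes Part a). The routine-but-necessary steps are: (i) carrying out the Abel transformation carefully to isolate the two pieces and verifying the coefficient sums are $O(1)$ uniformly in $n$; (ii) checking the normalizations so that the weight $1/\log(n+1)$ in $\widetilde L^{\ast}$ matches $1/l_n$ up to constants. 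The main obstacle is ensuring the Fej\'er-part coefficients are genuinely nonnegative and sum to at most a constant after the division by $l_n$ (they do, because $\|K_n\|_1\le 2$ was already used in Lemma \ref{ptw1}, so even without nonnegativity the $L_1$-norm bound goes through).

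\textbf{Part b).} Here the plan is the standard counterexample construction via a lacunary sum of atoms, using Lemma \ref{ptw1} as the engine. Pick a rapidly increasing sequence $A_k$ (so that $q_{A_k}$ grows fast), and for each $k$ let $a_k$ be the $1$-atom supported on $I_{2A_k}$ given by $a_k = M_{2A_k}\bigl(D_{M_{2A_k+?}} - D_{M_{2A_k}}\bigr)$ or, more standardly, a normalized block of Vilenkin characters; choose coefficients $\mu_k$ with $\sum |\mu_k| < \infty$ so that, by Lemma \ref{lemma2.1}, $f := \sum_k \mu_k a_k \in H_1$. Using condition (\ref{5}), one can choose $\mu_k$ tending to zero just slowly enough that $\mu_{k}\cdot\dfrac{\log q_{A_k}}{\varphi(q_{A_k})}\to\infty$ along a subsequence where the ratio $\log(n+1)/\varphi(n)$ is large. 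Then one estimates $L_{q_{A_k}}f$ from below: the "diagonal" contribution $\mu_k L_{q_{A_k}} a_k$ has $L_1$-norm $\gtrsim |\mu_k|\,\|F_{q_{A_k}}\|_1 / (\text{atom normalization}) \gtrsim |\mu_k|\log q_{A_k}$ by Lemma \ref{ptw1}, while the "off-diagonal" terms $\sum_{j\ne k}\mu_j L_{q_{A_k}} a_j$ are shown to be uniformly bounded — the tail $j>k$ because those atoms are supported on tiny sets and $L_{q_{A_k}}$ has $L_1$-bounded "small-index" behavior, and the head $j<k$ because $q_{A_k}$ is so large that $L_{q_{A_k}} a_j$ is effectively a partial-sum/Fej\'er average with $L_1$-norm controlled by $\|a_j\|_1\le 1$. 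Dividing by $\varphi(q_{A_k})$ and letting $k\to\infty$ yields $\bigl\|\,\sup_n |L_n f|/\varphi(n)\,\bigr\|_1 \ge \bigl\|L_{q_{A_k}}f/\varphi(q_{A_k})\bigr\|_1 \to \infty$, which is the claim.

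The main obstacle in Part b) is the \emph{off-diagonal estimate}: one must verify that $L_{q_{A_k}} a_j$ for $j \ne k$ contributes only $O(1)$ in $L_1$, so that the divergent diagonal term is not swamped. For $j<k$ this uses that $\widehat{a_j}(i)$ is supported on a bounded range of frequencies far below $q_{A_k}$, so that $S_i a_j = a_j$ for all $i$ appearing with non-negligible weight in $L_{q_{A_k}}$ once $q_{A_k}$ exceeds that range, making $L_{q_{A_k}} a_j$ close to $a_j$ itself (hence $L_1$-norm $\le c$); for $j>k$ one uses $\mathrm{supp}(a_j)\subset I_{2A_j}$ with $\mu(I_{2A_j})$ summably small together with $\|L_{q_{A_k}}a_j\|_\infty \le \|a_j\|_\infty \cdot \|F_{q_{A_k}}\|_1 \le M_{2A_j}\cdot c\log q_{A_k}$, and a careful choice of the growth of $A_k$ so that $\sum_{j>k}|\mu_j|\,M_{2A_j}\mu(I_{2A_j})\log q_{A_k}$ stays bounded. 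Balancing these growth conditions against (\ref{5}) is the delicate bookkeeping, but it is exactly the scheme used in \cite{PTW} and \cite{tepthesis} for the $H_p$, $0<p<1$, case, adapted to $p=1$ by replacing the $H_p$-atom normalization with the $1$-atom normalization and using Lemma \ref{ptw1} in place of its $H_p$ analogue.
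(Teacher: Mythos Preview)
Your Abel-transformation plan is overcomplicated and, as stated, contains a gap. If one actually carries out the Abel rearrangement of $L_n f=\frac{1}{l_n}\sum_{k}\frac{S_k f}{n-k}$ into Fej\'er means, the resulting coefficients have total variation of order $n/l_n$, not $O(1)$; so after dividing by a single factor $\log(n+1)\sim l_n$ you get a bound $\sim (n/l_n^{2})\,\sigma^{\ast}f$, which is unbounded in $n$. The paper avoids this entirely: since $L_n f$ is \emph{already} a convex combination of the $S_k f$, $k\le n$, one has the trivial pointwise domination
\[
\frac{|L_n f|}{\log(n+1)}\le \frac{1}{l_n}\sum_{k=0}^{n}\frac{1}{n-k}\cdot\frac{|S_k f|}{\log(k+1)}\le \widetilde{S}^{\ast}f,
\]
using only that $\log(k+1)\le\log(n+1)$. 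Theorem~T1 then finishes the proof in one line. Your ``partial-sum part'' alone, read correctly, \emph{is} this argument; the Fej\'er detour is unnecessary and does not work as written.

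\textbf{Part b).} Your approach is correct but takes a genuinely different route from the paper. The paper does \emph{not} build a single martingale $f=\sum_k\mu_k a_k$; instead it exhibits a sequence of test functions $f_{n_k}:=D_{M_{2n_k+1}}-D_{M_{2n_k}}$ with $\|f_{n_k}\|_{H_1}\le c$, computes directly that $|L_{q_{n_k}}f_{n_k}|=\frac{l_{q_{n_k-1}}}{l_{q_{n_k}}}|F_{q_{n_k-1}}|$, and applies Lemma~\ref{ptw1} to get $\|L_{q_{n_k}}f_{n_k}\|_1/\varphi(q_{n_k})\to\infty$. This establishes unboundedness of the operator with no off-diagonal bookkeeping at all. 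Your single-martingale construction (mirroring the proof of Theorem~1) would also work, and yields the slightly stronger conclusion of one fixed bad $f\in H_1$; note however that your off-diagonal case $j>k$ is in fact trivial, since the Fourier spectrum of $a_j$ lies entirely above $q_{A_k}$, so $L_{q_{A_k}}a_j=0$ --- the $L^\infty$/support argument you sketch is not needed. What the paper's approach buys is brevity; what yours buys is an explicit witness $f$.
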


\begin{remark}
\label{remark1} Note that b) shows that the statement in part a) of Theorem
2 is in the sense sharp with respect to the logarithmic factor in (\ref%
{wemax}).
\end{remark}

\section{Proofs of the Main Results}

\begin{proof}[Proof of Theorem 1.]
Let $\left\{ \alpha _{k}:k\in \mathbb{N}\right\} $ be an increasing sequence
of positive integers such that 
\begin{equation}
\sum_{\eta =0}^{k-1}\frac{M_{2\alpha_{\eta}}}{ \alpha^{1/2} _{\eta }}<\frac{%
M_{2\alpha _{k}}}{\alpha^{1/2}_{k}},  \label{4}
\end{equation}

\begin{equation}
\frac{M_{2\alpha_{k-1}}}{\alpha^{1/2}_{k-1}}<\alpha _{k}.  \label{55}
\end{equation}

Since  $$\alpha _{k}\rightarrow \infty, \ \  \sum_{\eta =0}^{k-1}\frac{M_{2\alpha_{\eta}}}{ \alpha^{1/2} _{\eta }}\rightarrow \infty, \ \ \frac{%
M_{2\alpha _{k}}}{\alpha^{1/2}_{k}}\rightarrow \infty,\ \ \text{as} \ \ k\rightarrow \infty$$  and all of three sequences are strightly increasing, we can construct lacunar increasing sequence $\left\{ \alpha _{k}:k\in\mathbb{N}%
\right\} $ of natural number $ \mathbb{N} $, for which 
\begin{equation*}
\sum_{\eta =0}^{k-1}\frac{M_{2\alpha_{\eta}}}{\alpha^{1/2}_{\eta }}<\frac{M_{2\alpha_{k}}}{\alpha^{1/2}_{k}}<\alpha_{k+1}.
\end{equation*}

We also note that under condition \eqref{55} we get that 
\begin{equation*}
M_{2\alpha_{k-1}}<\alpha_{k}\alpha^{1/2}_{k-1}\leq \alpha_{k}^{3/2}
\end{equation*}
and
\begin{equation}
\sum_{k=0}^{\infty }\alpha _{k}^{-1/2}<\alpha_{0}+\sum_{k=0}^{\infty }\frac{1}{M_{2\alpha_{k-1}}^{1/3}}<c<\infty.  \label{3}
\end{equation}

Similar consructions of martingales can be found in \cite{tep7}-\cite{tepthesis} (see also \cite{NT1}).

Let \qquad 
\begin{equation*}
f\left(x\right)=\sum_{k=0}^{\infty}\lambda_{k}a_{k},
\end{equation*}
where 
\begin{equation*}
\lambda _{k}=\frac{m_{2\alpha _{k}}}{\alpha _{k}}, \ \ a_{k}=\frac{1}{%
m_{2\alpha _{k}}}\left( D_{M_{2\alpha _{k}+1}}-D_{M_{_{2\alpha _{k}}}}
\right).
\end{equation*}
From (\ref{3}) and Lemma 1 we can conclude that $f\in
H_{1}\left(G_{m}\right).$
It is easy to show that 
\begin{equation}
\widehat{f}(j)=\left\{ 
\begin{array}{l}
\frac{1}{\alpha^{1/2} _{k}},\,\,\text{ if \thinspace \thinspace }j\in
\left\{ M_{2\alpha _{k}},...,\text{ ~}M_{2\alpha _{k}+1}-1\right\} ,\text{ }%
k=0,1,2..., \\ 
0,\text{ \thinspace \thinspace \thinspace if \thinspace \thinspace
\thinspace }j\notin \bigcup\limits_{k=1}^{\infty }\left\{ M_{2\alpha
_{k}},...,\text{ ~}M_{2\alpha _{k}+1}-1\right\} \text{ .}%
\end{array}
\right.  \label{8}
\end{equation}
We can write that 
\begin{eqnarray}
L_{q_{\alpha _{k}}}f &=&\frac{1}{l_{q_{\alpha _{k}}}}\underset{j=1}{\overset{%
q_{\alpha _{k}}}{\sum }}\frac{S_{j}f}{q_{\alpha _{k}}-j}  \label{16} \\
&=&\frac{1}{l_{q_{\alpha _{k}}}}\underset{j=1}{\overset{M_{2\alpha _{k}}-1}{%
\sum }}\frac{S_{j}f }{q_{\alpha _{k}}-j}+\frac{1}{q_{\alpha _{k}}}\underset{%
j=M_{2\alpha _{k}}}{\overset{q_{\alpha _{k}}}{\sum }}\frac{S_{j}f}{q_{\alpha
_{k}}-j}  \notag \\
&=&\frac{1}{l_{q_{\alpha _{k}}}}\underset{j=1}{\overset{M_{2\alpha _{k-1}}+1}{%
\sum }}\frac{S_{j}f }{q_{\alpha _{k}}-j}+\frac{1}{q_{\alpha _{k}}}\underset{%
j=M_{2\alpha _{k}}}{\overset{q_{\alpha _{k}}}{\sum }}\frac{S_{j}f}{q_{\alpha
_{k}}-j}  \notag \\
&:=& I+II.  \notag
\end{eqnarray}
Let $j<M_{2\alpha _{k-1}+1}.$ By combining (\ref{4}) and (\ref{8}) we get that 
\begin{eqnarray}  \label{9}
\left| S_{j}f \right| &\leq &\sum_{\eta =0}^{k-1}\sum_{v=M_{2\alpha _{\eta
}}}^{M_{2\alpha _{\eta }+1}-1}\left| \widehat{f}(v)\right|  \notag \\
&\leq &\sum_{\eta =0}^{k-1}\sum_{v=M_{2\alpha _{\eta }}}^{M_{2\alpha _{\eta
}+1}-1}\frac{1}{\sqrt{\alpha _{\eta }}}\leq c\sum_{\eta =0}^{k-1}\frac{M_{2\alpha _{\eta }}}{\sqrt{\alpha _{\eta }}} \notag \\
&\leq&c\sum_{\eta =0}^{k-2}\frac{M_{2\alpha _{\eta }}}{\sqrt{\alpha _{\eta }}}+\frac{cM_{2\alpha _{k-1}}}{
\sqrt{\alpha _{k-1}}} \notag \\
&\leq&\frac{2cM_{2\alpha _{k-1}}}{
\sqrt{\alpha _{k-1}}}. \notag
\end{eqnarray}
Since 
\begin{eqnarray}  \label{10}
\sum_{j=0}^{M_{2\alpha _{k-1}}}\frac{1}{q_{\alpha _{k}}-j}&\leq&
\sum_{j=M_{2\alpha _{k}-1}}^{M_{2\alpha_{k}}}\frac{1}{j}\leq \log
M_{2\alpha_{k}}-\log M_{2\alpha_{k}-1} \\
&\leq & \log \frac{M_{2\alpha_{k}}}{M_{2\alpha_{k}-1}}<c<\infty  \notag
\end{eqnarray}
according to (\ref{55}) and (\ref{10}) we can conclude that 
\begin{eqnarray*}
&&\left|I\right|\leq\frac{2c}{\alpha_{k}}\sum_{j=0}^{M_{2\alpha _{k-1}}}\frac{1}{q_{\alpha _{k}}-j}\frac{M_{2\alpha _{k-1}}} {\sqrt{\alpha_{k-1}}} \\ \notag
&\leq&\frac{c}{\alpha_k}
\frac{M_{2\alpha_{k-1}}}{\sqrt{\alpha_{k-1}}}<c<\infty.
\end{eqnarray*}

Hence, 
\begin{equation*}
\left\Vert I\right\Vert_1<c<\infty.  \label{17}
\end{equation*}
Let $M_{2\alpha _{k}}\leq j\leq q_{\alpha _{k}}.$ Then we have that 
\begin{eqnarray*}
S_{j}f &=&\sum_{\eta =0}^{k-1}\sum_{v=M_{2\alpha _{\eta }}}^{M_{2\alpha
_{\eta }+1}-1}\widehat{f}(v)\psi _{v} +\sum_{v=M_{2\alpha _{k}}}^{j-1}%
\widehat{f}(v)\psi _{v}  \label{11} \\
&=&\sum_{\eta =0}^{k-1}\frac{M_{2\alpha _{\eta }}^{1/p-1}}{\sqrt{\alpha
_{\eta }}}\left( D_{M_{_{2\alpha _{\eta }+1}}} -D_{M_{_{2\alpha _{\eta }}}}
\right)  \notag \\
&&+\frac{M_{2\alpha _{k}}^{1/p-1}}{\sqrt{\alpha _{k}}}\left( D_{j}
-D_{M_{_{2\alpha _{k}}}} \right) .  \notag
\end{eqnarray*}
This gives that 
\begin{eqnarray*}
II &=&\frac{1}{l_{q_{\alpha _{k}}}}\sum_{j=M_{2\alpha _{k}}}^{q_{\alpha
_{k}}}\frac{1}{q_{\alpha _{k}}-j}\left( \sum_{\eta =0}^{k-1}\frac{1}{\sqrt{%
\alpha _{\eta }}}\left( D_{M_{_{2\alpha _{\eta }+1}}}-D_{M_{_{2\alpha _{\eta
}}}} \right) \right)  \label{18} \\
&&+\frac{1}{l_{q_{\alpha _{k}}}}\frac{1}{\sqrt{\alpha _{k}}}%
\sum_{j=M_{2\alpha _{k}}}^{q_{\alpha _{k}}}\frac{\left(
D_{j}-D_{M_{_{2\alpha _{k}}}} \right) }{q_{\alpha _{k}}-j}:=II_{1}+II_{2}. 
\notag
\end{eqnarray*}
Applying (\ref{4}) in $II_{1}$ we have that 
\begin{eqnarray*}  \label{19}
\left\Vert II_{1}\right\Vert_1 &\leq &\frac{1}{l_{q_{\alpha _{k}}}}%
\sum_{j=M_{2\alpha _{k}}}^{q_{\alpha _{k}}}\frac{1}{q_{\alpha _{k}}-j}
\sum_{\eta =0}^{k-1}\frac{1}{\sqrt{\alpha _{\eta }}}\left\Vert
D_{M_{_{2\alpha _{\eta }+1}}}-D_{M_{_{2\alpha _{\eta }}}} \right\Vert_1 \\
&\leq & \frac{1}{l_{q_{\alpha _{k}}}}\sum_{j=M_{2\alpha _{k}}}^{q_{\alpha
_{k}}}\frac{1}{q_{\alpha _{k}}-j} \sum_{\eta =0}^{\infty}\frac{1}{\sqrt{%
\alpha _{\eta }}}<c<\infty.  \notag
\end{eqnarray*}

Since 
\begin{equation}  \label{141}
D_{j+M_{n}}=D_{M_{n}} +\psi_{M_n}D_{j} ,\text{ \thinspace when \thinspace }%
j<M_{n},
\end{equation}
for $II_{2}$ we obtain that 
\begin{eqnarray*}
II_{2}&=&\frac{1}{l_{q_{\alpha _{k}}}}\frac{1}{\sqrt{\alpha _{k}}}%
\psi_{M_{2\alpha _{k}}}\sum_{j=0}^{q_{\alpha _{k}-1}-1}\frac{ D_{j}}{%
q_{\alpha _{k}-1}-j} \\
&=&\frac{l_{q_{\alpha_{k}}-1}}{l_{q_{\alpha _{k}}}}\frac{1}{\sqrt{\alpha _{k}%
}}\psi_{M_{2\alpha _{k}}}F_{q_{\alpha _{k}}-1}.  \label{20}
\end{eqnarray*}

Hence, if we apply Lemma \ref{bt} in the case when $ \delta=1/8 $ and $ \beta=3/4 $ for the sufficiently large $k $ we can estimate
as follows 
\begin{eqnarray*}
\left\Vert L_{q_{\alpha _{k}}}f \right\Vert_1 &\geq & \left\Vert
II_2\right\Vert_1-\left\Vert II_1\right\Vert_1-\left\Vert I\right\Vert_1 \\
&\geq & \frac{1}{2}\left\Vert II_2\right\Vert_1 \geq \frac{c\left\Vert
F_{q_{\alpha _{k}}-1} \right\Vert_1}{\sqrt{\alpha _{k}}} \\
&\geq &\frac{c\alpha_{k}^{3/4}}{\sqrt{\alpha_{k}}}\geq c\alpha_{k}^{1/4}\rightarrow \infty ,\text{ as }k\rightarrow \infty.
\end{eqnarray*}

The proof is complete.
\end{proof}

\begin{proof}[Proof of Theorem 2.]
It is obvious that 
\begin{equation*}
\underset{n\in \mathbb{N}}{\sup }\frac{\left\vert L_{n}f\right\vert }{\log
\left( n+1\right) }\leq \frac{1}{l_{n}}\sum_{k=0}^{n}\frac{1}{(n-k)}\underset%
{k\in \mathbb{N}}{\sup }\frac{\left\vert S_{k}f\right\vert }{\log \left(
k+1\right) }\leq \sup_{n\in \mathbb{N}}\frac{\left\vert S_{n}f\right\vert }{%
\log \left( n+1\right) }.
\end{equation*}
By using Theorem T1 we can conclude that%
\begin{equation*}
\left\Vert \underset{n\in \mathbb{N}}{\sup }\frac{\left\vert
L_{n}f\right\vert }{\log \left( n+1\right) }\right\Vert _{1}\leq \left\Vert
\sup_{n\in \mathbb{N}}\frac{\left\vert S_{n}f\right\vert }{\log \left(
n+1\right) }\right\Vert _{1}\leq c\left\Vert f\right\Vert _{H_{1}}.
\end{equation*}
The proof of part a) is complete.

Under condition (\ref{5}), there exists a positive integers $\ \left\{
n_{k};k\in \mathbb{N}_{+}\right\} \subset \left\{ \lambda _{k};k\in \mathbb{N%
}_{+}\right\} $ such that 
\begin{equation*}
\lim_{k\rightarrow \infty }\frac{n_{k}}{\varphi \left( q_{n_{k}}\right) }%
=\infty .
\end{equation*}
Set 
\begin{equation*}
f_{n_{k}}=D_{M_{2n_{k}+1}}-D_{M_{_{2n_{k}}}}.
\end{equation*}
It is evident 
\begin{equation*}
\widehat{f}_{n_{k}}\left( i\right) =\left\{ 
\begin{array}{l}
\text{ }1,\text{ if }i=M_{2n_{k}},...,M_{2n_{k}+1}-1, \\ 
\text{ }0,\text{otherwise.}%
\end{array}%
\right.
\end{equation*}
Then we have that 
\begin{equation}
S_{i}f_{n_{k}}=\left\{ 
\begin{array}{l}
D_{i}-D_{M_{_{2n_{k}}}},\text{ }\text{ if }
i=M_{_{2n_{k}}}+1,...,M_{2n_{k}+1}-1, \\ 
\text{ }f_{n_{k}},\text{ \ \ }\text{ \ \ }\text{ \ \ }\text{ \ \ }\text{ if }%
i\geq M_{2n_{k}+1}, \\ 
0,\text{ \ \ }\text{ \ \ }\text{ \ \ }\text{ \qquad otherwise.}%
\end{array}%
\right.  \label{14}
\end{equation}
From (\ref{normsup}) and (\ref{14}) we get that 
\begin{equation*}
\left\Vert f_{n_{k}}\right\Vert _{H_{1}}<c<\infty .
\end{equation*}
It is easy to show that 
\begin{eqnarray*}
L_{q_{n_{k}}}f_{n_{k}} &=&\frac{1}{l_{q_{n_{k}}}}\underset{j=1}{\overset{%
q_{n_{k}}}{\sum }}\frac{S_{j}f_{n_{k}}}{q_{n_{k}}-j} \\
&=&\frac{1}{l_{q_{n_{k}}}}\underset{j=M_{_{2n_{k}}}+1}{\overset{q_{n_{k}}}{%
\sum }}\frac{D_{j}-D_{M_{_{2n_{k}}}}}{q_{n_{k}}-j} \\
&=&\frac{1}{l_{q_{n_{k}}}}\underset{j=1}{\overset{q_{n_{k}-1}-1}{\sum }}%
\frac{D_{j+M_{_{2n_{k}}}}-D_{M_{_{2n_{k}}}}}{q_{n_{k}-1}-j}.
\end{eqnarray*}

By applying (\ref{141}) we find that 
\begin{equation*}
\left\vert L_{q_{n_{k}}}f_{n_{k}}\right\vert =\frac{1}{l_{q_{n_{k}}}}%
\left\vert \underset{j=1}{\overset{q_{n_{k}-1}}{\sum }}\frac{D_{j}}{%
q_{n_{k}-1}-j}\right\vert =\frac{l_{q_{n_{k}-1}}}{l_{q_{n_{k}}}}\left\vert
F_{q_{n_{k}-1}}\right\vert .
\end{equation*}

By now using Lemma \ref{ptw1} we can conclude that 
\begin{equation*}
\int_{G_{m}}\frac{\left\vert L_{q_{n_{k}}}f_{n_{k}}\right\vert }{\varphi
\left( {q_{n_{k}}}\right) }d\mu \geq \frac{c\left\Vert
F_{q_{n_{k}-1}}\right\Vert }{\varphi \left( q_{n_{k}}\right) }\geq \frac{%
cn_{k}}{\varphi \left( q_{n_{k}}\right) }\rightarrow \infty ,\text{ as }%
k\rightarrow \infty .
\end{equation*}

The proof is complete.

\textbf{Acknowledgment:} The author would like to thank the referee for helpful suggestions, which absolutely improve the final version of our paper.
\end{proof}

\end{document}